\newcommand{\vfi}{\varphi}
\newtheorem{theorem}{\sc Theorem}[section]
\newtheorem{thm}[theorem]{\sc Theorem}
\newtheorem{lem}[theorem]{\sc Lemma}
\newtheorem{cor}[theorem]{\sc Corollary}
\newtheorem{que}[theorem]{\sc Question}
\dedicatory{Dedicated to Professor Antonio Paques on the occasion of his 70th anniversary}
\title[Non-abelian tensor product]{Non-abelian tensor product of residually finite groups}
\author[Bastos]{R. Bastos}
\address{ Departamento de Matem\'atica, Universidade de Bras\'ilia,
Brasilia-DF, 70910-900 Brazil }
\email{bastos@mat.unb.br}
\author[Rocco]{N.\,R. Rocco }
\address{ Departamento de Matem\'atica, Universidade de Bras\'ilia,
Brasilia-DF, 70910-900 Brazil }
\email{norai@unb.br}
\subjclass[2010]{20E26, 20F50, 20J06}
\keywords{Residually finite groups; Locally finite groups; Non-abelian tensor product of groups}
\begin{document}

\maketitle

\begin{abstract}
Let $G$ and $H$ be groups that act compatibly on each other. We denote by $\eta(G,H)$ a certain extension of the non-abelian tensor product $G \otimes H$ by $G \times H$. Suppose that $G$ is residually finite and the subgroup $[G,H] = \langle g^{-1}g^h \ \mid g \in G, h\in H\rangle$ satisfies some non-trivial identity $f \equiv~1$. We prove that if $p$ is a prime and every tensor has $p$-power order, then the non-abelian tensor product $G \otimes H$ is locally finite. Further, we show that if $n$ is a positive integer and every tensor is left $n$-Engel in $\eta(G,H)$, then the non-abelian tensor product $G \otimes H$ is locally nilpotent. The content of this paper extend some results concerning the non-abelian tensor square $G \otimes G$.      
\end{abstract}

\maketitle

\section{Introduction}

Let $G$ and $H$ be groups each of which acts upon the other (on the right), 
\[
G\times H \rightarrow G, \; (g,h) \mapsto g^h; \; \; H\times G \rightarrow
H, \; (h,g) \mapsto h^g
\]
and on itself by conjugation, in such a way that for all $g,g_1 \in G$ and
$h,h_1 \in H$,
\begin{equation}   \label{eq:0}
g^{\left( h^{g_1} \right) } = \left( \left( g^{g^{-1}_1}  \right) ^h \right) ^{g_1} \; \; \mbox{and} \; \; h^{\left( g^{h_1}\right) } =
\left( \left( h^{h_1^{-1}} \right) ^g \right) ^{h_1}.
\end{equation}
In this situation we say that $G$ and $H$ act {\em compatibly} on each other. Let  $H^{\varphi}$ be 
an extra copy of $H$, isomorphic via $\varphi : H \rightarrow
H^{\varphi}, \; h \mapsto h^{\varphi}$, for all $h\in H$. Consider the group $\eta(G,H)$ defined in  \cite{Nak} as 
$$\begin{array}{ll} {\eta}(G,H) =  \langle
G \cup H^{\varphi}\ |  &
[g,{h}^{\varphi}]^{g_1}=[{g}^{g_1},({h}^{g_1})^{\varphi}], \;
[g,{h}^{\varphi}]^{h^{\varphi}_1} = [{g}^{h_1},
({h}^{h_1})^{\varphi}] , \\ & \ \forall g,g_1 \in G, \; h, h_1 \in H
\rangle . \end{array}$$
We observe that when $G=H$ and all actions are conjugations, $\eta (G,H)$ becomes the group $\nu (G)$ introduced in \cite{NR1}: 
$$\begin{array}{ll} {\nu}(G) =  \langle
G \cup G^{\varphi}\ |  &
[g_1,{g_2}^{\varphi}]^{g_3}=[{g_1}^{g_3},({g_2}^{g_3})^{\varphi}] = 
[g_1,{g_2}^{\varphi}]^{g^{\varphi}_3}, \ g_i \in G
\rangle . \end{array}$$

It is a well known fact (see \cite[Proposition 2.2]{Nak}) that the subgroup 
$[G, H^{\varphi}]$ of $\eta(G,H)$ is canonically isomorphic with the {\em non-abelian 
tensor product} $G \otimes H$, as defined by R. Brown and J.-L. Loday in their seminal paper \cite{BL}, the isomorphism being induced by $g \otimes h \mapsto 
[g, h^{\varphi}]$ (see also Ellis and Leonard \cite{EL}). It is clear that the subgroup $[G,H^{\varphi}]$ is normal in $\eta(G,H)$ and one has the decomposition 
\begin{equation} \label{eq:decomposition}
 \eta(G,H) = \left ( [G, H^{\varphi}] \cdot G \right ) \cdot H^{\varphi},
\end{equation}
where the dots mean (internal) semidirect products. In particular, $\nu(G) = ([G,G^{\varphi}] \cdot G) \cdot G^{\varphi}$, where $[G,G^{\varphi}]$ is isomorphic to $G \otimes G$, the non-abelian tensor square of $G$. 

An element $\alpha \in \eta(G,H)$ will be called a {\em tensor} if $\alpha = [a,b^{\varphi}]$ for suitable $a\in G$ and $b\in H$. We write $T_{\otimes}(G,H)$ to denote the set of all tensors.  

Taking to account the actions of $H$ on $G$ and of $G$ on $H$, we denote by $[G,H]$ the subgroup $\langle g^{-1}g^h \mid \ g \in G, h \in H\rangle$ of $G$ and, similarly, $[H,G]$ denotes the subgroup $\langle h^{-1}h^g \mid \ h \in H, g \in G \rangle$ of $H$. In particular, if $G=H$ and all actions are conjugations, then the subgroup $[G,H]$ is the derived subgroup $G'$. If $g \in G$ and $h \in H$, it is customary to write $[g,h]$ rather that $g^{-1}g^h$.      

In the present paper we want to study the following question: If we assume certain restrictions on the set $T_{\otimes}(G,H)$, how does this influence in the structure of the groups $[G, H^{\varphi}]$ and $\eta(G,H)$?  

For elements $x,y$ of an arbitrarily group $G$ we define $[x,{}_1y]=[x,y]$ and, for $i\geq1$, $[x,{}_{i+1}y]=[[x,{}_{i}y],y]$, where $[x,y] = x^{-1}y^{-1}xy$. An element $y\in G$ is called {\em left $n$-Engel} (in $G$) if for any $x\in G$ we have $[x,{}_{n}y]=1$. The group $G$ is called a {\em left $n$-Engel group} if $[x,{}_{n}y]=1$ for all $x,y \in G$.

According to the positive solution of the Restricted Burnside Problem (Zelmanov, \cite{ze1,ze2}) every residually finite group of bounded exponent is locally finite. Following Zelmanov's solution, Wilson proved that if $G$ is a left $n$-Engel residually finite group, then $G$ is locally nilpotent \cite{w}. Later, Shumyatsky established that if $G$ is residually finite and every commutator is left $n$-Engel, then $G'$ is locally nilpotent \cite{shu1}. Another interesting result in this context, also due to Shumyatsky \cite{shu3}, states that if $G$ is a residually finite group satisfying a non-trivial identity and generated by a normal commutator-closed set of $p$-elements, then $G$ is locally finite. A subset $X$ of a group $G$ is called {\it commutator-closed} if $[x,y]\in X$ whenever $x,y\in X$. 

In the article \cite{BRMfM} the authors generalize in a certain way Shumyatsky's result from $G'$ to $[G,G^{\varphi}]$ and $\nu(G)'$, by proving that, given a prime $p$ and a residually finite group $G$ satisfying some non-trivial identity, if every tensor has $p$-power order then  $[G,G^{\varphi}]$ and $\nu(G)'$ are locally finite. In the present paper and under appropriate conditions, we further extend this result to arbitrary non-abelian tensor product of groups $[G,H^{\varphi}]$.    

\begin{thm} \label{thm.1}
Let $p$ be a prime and $G,H$ groups acting compatibly on each other such that the subgroup $[G,H]$ satisfies some non-trivial identity. Assume that $G$ is residually finite and that every tensor has $p$-power order. Then the non-abelian tensor product $[G,H^{\varphi}]$ is locally finite.   
\end{thm}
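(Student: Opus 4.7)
My plan follows the strategy of \cite{BRMfM} for the non-abelian tensor square, adapted to the setting of compatibly acting groups $G$ and $H$. The goal is to verify, for the group $L := [G,H^{\varphi}]$, the three hypotheses of Shumyatsky's theorem from \cite{shu3}: (a) $L$ is generated by a normal commutator-closed set of $p$-elements; (b) $L$ satisfies a non-trivial group law; and (c) $L$ is residually finite. Once these are established, the conclusion that $L$ is locally finite is immediate.

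For (a), the natural generating set is $T := T_{\otimes}(G,H)$. By hypothesis each tensor has $p$-power order. The defining relations of $\eta(G,H)$ show that the $G$- and $H^{\varphi}$-conjugates of a tensor are again tensors; combined with the identity $[a,b^{\varphi}]^{-1}=[a^{-1},(b^{a^{-1}})^{\varphi}]$ (closure under inversion) and the decomposition (\ref{eq:decomposition}), this yields that $T$ is invariant under every conjugation in $\eta(G,H)$ and closed under commutators, exactly as verified in the tensor square case of \cite{BRMfM}. For (b), I use that $L$ maps onto $[G,H]$ via the commutator homomorphism $[g,h^{\varphi}]\mapsto[g,h]$; since the kernel of this map is central in $L$ (a standard property of the non-abelian tensor product) and $[G,H]$ satisfies the non-trivial identity $f\equiv 1$, every value of $f$ on $L$ is central, so $L$ satisfies the non-trivial law $[f(x_1,\ldots,x_k),y]\equiv 1$.

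The main obstacle is (c). Because only $G$ is assumed residually finite — $H$ is arbitrary, and $\eta(G,H)$ need not be residually finite (any residual finite quotient of $\eta(G,H)$ would force $H^{\varphi}$, hence $H$, to be residually finite) — one cannot simply pass residual finiteness from $\eta(G,H)$ to $L$. The plan is to reduce to a finitely generated subgroup $K=\langle[a_1,b_1^{\varphi}],\ldots,[a_n,b_n^{\varphi}]\rangle\leq L$, construct finitely generated subgroups $G_0\leq G$ and $H_0\leq H$ closed under the mutual compatible actions so that $K\leq[G_0,H_0^{\varphi}]\leq\eta(G_0,H_0)$, and then use residual finiteness of $G_0$ (inherited from $G$) together with the finiteness of the non-abelian tensor product of finite groups and the identity on $[G_0,H_0]$ to produce a residual system of finite quotients of $K$ separating its elements. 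The delicate step is maintaining compatibility of the mutual actions on the finite quotients and ensuring that the identity on $[G,H]$ survives, so that enough finite images of $K$ are produced for the residual finiteness of $K$ (and hence of $L$) to follow.
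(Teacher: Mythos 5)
Your steps (a) and (b) are fine: the tensors do form a normal commutator-closed set of $p$-elements generating $[G,H^{\varphi}]$ (Lemma \ref{basic.eta}(ii) gives commutator-closure), and the law $[f(x_1,\dots,x_k),y]\equiv 1$ does hold on $[G,H^{\varphi}]$ because $\ker(\lambda)$ is central. The genuine gap is step (c): you never establish that $[G,H^{\varphi}]$ is residually finite, and this is not a technicality you can defer. Only $G$ is assumed residually finite; $[G,H^{\varphi}]$ sits as a central extension of $[G,H]$ by $\ker(\lambda)$, and central extensions of residually finite groups need not be residually finite, so nothing forces $[G,H^{\varphi}]$ to be residually finite. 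Your sketched repair is also shaky at its first move: given finitely many $a_i\in G$, $b_i\in H$, the smallest subgroups $G_0\leq G$, $H_0\leq H$ containing them and closed under the mutual actions need not be finitely generated, so residual finiteness of $G_0$ and finiteness of quotient tensor products do not obviously combine into a residual system for $K$. As written, the hypothesis of Shumyatsky's theorem that you most need is exactly the one left unverified.

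The paper sidesteps this entirely by applying Shumyatsky's lemma not to $[G,H^{\varphi}]$ but to the subgroup $[G,H]\leq G$, where residual finiteness is inherited for free from $G$: the set $\{[g,h]\}$ is a normal commutator-closed set of $p$-elements (Lemmas \ref{lem.closed} and \ref{lem.pe}(a)) and $[G,H]$ satisfies the identity by hypothesis, so $[G,H]$ is locally finite. Then one lifts back through $\lambda$: a finitely generated subgroup $W\leq[G,H^{\varphi}]$ (which one may take to be generated by tensors) has finite image in $[G,H]$ and central kernel, hence is central-by-finite; Schur's theorem makes $W'$ finite, and $W/W'$ is finitely generated abelian with torsion generators, hence finite. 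If you reorganize your argument along these lines --- run Shumyatsky inside $G$, then use centrality of $\ker(\lambda)$ plus Schur rather than trying to verify residual finiteness upstairs --- your steps (a) and (b) become unnecessary and the proof closes.
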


 In the same paper \cite{BRMfM} we also proved that if $G$ is a residually finite group in which for every $x,y \in G$ there exists a $p$-power $q=q(x,y)$, dividing a fixed power $p^m$, such that $[x,y^{\varphi}]^q$ is left $n$-Engel in $\nu(G)$, then $[G,G^{\varphi}]$ is locally virtually nilpotent. In the present context of groups $G$ and $H$ acting compatibly one on each other, we prove:

\begin{thm} \label{thm.2}
Let $n$ be a positive integer and $G,H$ be groups acting compatibly on each other such that $[G,H]$ satisfies some non-trivial identity. Assume that $G$ is residually finite and that every tensor is left $n$-Engel in $\eta(G,H)$. Then the non-abelian tensor product $[G,H^{\varphi}]$ is locally nilpotent.   
\end{thm}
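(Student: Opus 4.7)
The plan is to transport the Engel condition from tensors in $\eta(G,H)$ to ordinary commutators inside the residually finite group $G$, use the Wilson--Shumyatsky machinery to conclude that $[G,H]$ is locally nilpotent, and finally lift this to $[G,H^{\varphi}]$ via the canonical epimorphism $[G,H^{\varphi}]\twoheadrightarrow [G,H]$.

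First I would exploit the natural homomorphism $\pi\colon \eta(G,H)\to \Gamma$, where $\Gamma$ denotes the group generated by $G$ and $H$ with their prescribed compatible actions, defined by $g\mapsto g$ and $h^{\varphi}\mapsto h$; this is well-defined because the defining relations of $\eta(G,H)$ are satisfied inside $\Gamma$ by virtue of the compatibility conditions (\ref{eq:0}). The tensor $[g,h^{\varphi}]$ then maps to the commutator $[g,h]=g^{-1}g^{h}\in [G,H]\le\Gamma$. Since the left $n$-Engel identity is preserved by homomorphisms, each $[g,h]$ with $g\in G$, $h\in H$ becomes left $n$-Engel in $\Gamma$, and a fortiori in $G$.

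Next I would show that $[G,H]$ is locally nilpotent. As a subgroup of the residually finite group $G$ it is itself residually finite, and by hypothesis it satisfies a non-trivial identity. Moreover it is generated by the left $n$-Engel elements $\{[g,h] : g\in G,\ h\in H\}$. Any finitely generated subgroup of $[G,H]$ is contained in the subgroup generated by finitely many such elements together with their $G$-conjugates, which is residually finite and satisfies the identity; applying Shumyatsky's theorem \cite{shu1} (or, once the whole slice is verified to be $n$-Engel, Wilson's theorem \cite{w} directly) then yields nilpotency of this finitely generated piece.

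Finally I would lift local nilpotency to $[G,H^{\varphi}]$. The canonical epimorphism $\rho\colon [G,H^{\varphi}]\to [G,H]$ given by $[g,h^{\varphi}]\mapsto[g,h]$ has a kernel $M$, which in the $\nu(G)$ case reduces to the central subgroup $\nabla(G)=\langle[g,g^{\varphi}]:g\in G\rangle$ of $\nu(G)$. Using the defining relations of $\eta(G,H)$, the compatibility (\ref{eq:0}), and the $n$-Engel hypothesis on tensors, I expect to show that $M$ is centralized by all tensors on each finitely generated piece, so that the extension $1\to M\to [G,H^{\varphi}]\to[G,H]\to 1$ is locally central; since central extensions of locally nilpotent groups by locally nilpotent groups are locally nilpotent, the conclusion follows.

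The main obstacle I anticipate is this last step, namely pinning down the structure of $M$ and confirming its (essentially) central position inside $[G,H^{\varphi}]$, which requires delicate commutator calculations in $\eta(G,H)$ that simultaneously exploit both compatibility relations together with the Engel hypothesis. A subsidiary subtlety is that the generating set $\{[g,h]\}$ of $[G,H]$ is not a priori commutator-closed, so the Shumyatsky step may need to be recast by passing to normal closures inside a finitely generated slice of $G$ and verifying that the $n$-Engel condition persists on this enlarged generating set.
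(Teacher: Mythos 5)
Your outer frame is the same as the paper's: the epimorphism $\lambda\colon [G,H^{\varphi}]\to[G,H]$, $[g,h^{\varphi}]\mapsto[g,h]$, has central kernel (this is Lemma~\ref{lem.kernel}, an unconditional consequence of Lemma~\ref{basic.eta}(ii), so the ``delicate commutator calculations'' you anticipate for the last step are not needed), and the Engel condition on tensors does push down to the elements $[g,h]$ being left $n$-Engel in $G$ (Lemma~\ref{lem.pe}(b)). Also, the ``subsidiary subtlety'' you flag is not one: the set $\{[g,h]\}$ \emph{is} normal and commutator-closed in $G$ (Lemma~\ref{lem.closed}).

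The genuine gap is the middle step, which is the heart of the theorem. Neither of the results you invoke applies. Wilson's theorem requires the \emph{whole} group to be left $n$-Engel, whereas here only the generators $[g,h]$ of $[G,H]$ are assumed Engel, not arbitrary elements of $[G,H]$ or of $G$. Shumyatsky's theorem from \cite{shu1} requires every commutator $[x,y]$ of a residually finite group to be left $n$-Engel and concludes that the \emph{derived subgroup} is locally nilpotent; here the Engel elements $g^{-1}g^{h}$ are commutators only with respect to the external action of $H$, the subgroup $[G,H]$ is not the derived subgroup of $G$, and if you try to work inside your auxiliary group $\Gamma=\langle G,H\rangle$ you face two further problems: $\Gamma$ need not be residually finite (only $G$ is assumed so), and commutators $[\gamma_1,\gamma_2]$ with both entries outside $G\cup H$ are not covered by the hypothesis. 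What is actually needed is a new statement --- a residually finite group with a normal commutator-closed generating set of left $n$-Engel elements, satisfying a non-trivial identity, is locally nilpotent --- and the paper proves it from scratch: it reduces a finitely generated $T=\langle [g_1,h_1],\dots,[g_s,h_s]\rangle$ to the residually-$p$ case, forms the Lie algebra $L_p(T)$ from the Zassenhaus--Jennings--Lazard series, uses commutator-closedness plus Lemma~\ref{lem.pe} to get ad-nilpotency of Lie commutators in the generators, uses the Wilson--Zelmanov theorem to get that $L_p(T)$ is PI \emph{from the identity satisfied by $[G,H]$}, then applies Zelmanov's nilpotency theorem, Lazard's criterion, the Tits alternative and Gruenberg's theorem. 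Note in particular that the non-trivial identity on $[G,H]$, which your sketch never uses in an essential way, is exactly what makes the Lie algebra PI and what rules out free subgroups in the linearization step; the authors state explicitly that they do not know how to remove it.
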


We do not know whether the hypothesis that $[G,H]$ satisfies some non-trivial identity is really necessary in Theorem \ref{thm.2}. The proof presented here uses this assumption in a very essential way. This suggests the following question: 

\begin{que} \label{question}
Let $n$ be a positive integer and $G,H$ be groups acting compatibly on each other. Assume that $G$ is residually finite and every tensor is left $n$-Engel in $\eta(G,H)$. Is it true that $[G,H^{\varphi}]$ is locally nilpotent? 
\end{que}

The paper is organized as follows. In the next section we collect some structural results to the non-abelian tensor product $[G,H^{\varphi}]$ and related constructions which are later used in the proofs of our main results. In Section 3 we describe some important ingredients of what is often called ``Lie methods in group theory''. Section 4 contains the proofs of the main theorems. 

\section{Preliminary results}

Throughout the rest of the paper we assume that  $G$ and $H$  be groups acting compatibly on each other.

The following result is a consequence of \cite[Proposition 2.3]{BL}.

\begin{lem} 
\label{basic.eta}
The following relations hold in $\eta(G,H)$ for all 
$g, x \in G$ and $h,y \in H$.
\begin{itemize}
\item[$(i)$] $[g, h^{\varphi}]^{[x, y^{\varphi}]} = [g, h^{\varphi}]^{x^{-1}x^y} = [g,h^{\varphi}]^{(y^{-x}y)^{\varphi}}$; 
\item[$(ii)$] $[[g,h^{\varphi}],[x,y^{\varphi}]] = [g^{-1}g^h,(y^{-x}y)^{\varphi}]$.
\end{itemize}
\end{lem}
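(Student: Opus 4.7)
The lemma records two standard formulas for the non-abelian tensor product, translated into the $\eta$-notation via the canonical isomorphism $[G,H^\varphi] \cong G \otimes H$, $[g,h^\varphi] \mapsto g \otimes h$; both are the content of \cite[Proposition 2.3]{BL}. My plan is to reproduce them by a direct calculation inside $\eta(G,H)$ that uses only the defining relations of $\eta(G,H)$ and the compatibility conditions (\ref{eq:0}).

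For (i), I expand $[x, y^\varphi] = x^{-1}(y^\varphi)^{-1}xy^\varphi$ and conjugate $[g, h^\varphi]$ by the four factors in turn, alternating between the two defining relations of $\eta(G,H)$. The result is $[a, b^\varphi]$ where
\[
a = (((g^{x^{-1}})^{y^{-1}})^x)^y, \qquad b = (((h^{x^{-1}})^{y^{-1}})^x)^y.
\]
The first compatibility relation in (\ref{eq:0}), applied with $g_1 = x$ and $h = y^{-1}$, collapses the three innermost exponents of $a$ to give $((g^{x^{-1}})^{y^{-1}})^x = g^{(y^x)^{-1}}$; one further use of (\ref{eq:0}), together with the fact that $H$ acts on $G$ by automorphisms, shows that the outer $y$-action then yields $a = g^{x^{-1}x^y}$, i.e.\ conjugation of $g$ by $x^{-1}x^y \in G$ inside $G$. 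A mirror computation using the second compatibility relation gives $b = h^{x^{-1}x^y}$ (the action of $x^{-1}x^y \in G$ on $h \in H$). Feeding the pair $(a,b)$ back through the defining relation $[g, h^\varphi]^{g_1} = [g^{g_1}, (h^{g_1})^\varphi]$ at $g_1 = x^{-1}x^y$ proves the first equality, and the second equality follows by the symmetric argument using the $H^\varphi$-defining relation with $h_1 = y^{-x}y$.

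For (ii), I start from the identity
\[
[[g, h^\varphi], [x, y^\varphi]] = [g, h^\varphi]^{-1} \cdot [g, h^\varphi]^{[x, y^\varphi]}
\]
and use (i) in the form $[g, h^\varphi]^{[x, y^\varphi]} = [g, h^\varphi]^{(y^{-x}y)^\varphi}$. Expanding the second factor by the defining relation at $h_1 = y^{-x}y$ and telescoping the resulting product via the bilinearity identity $[g_1 g_2, k^\varphi] = [g_1, k^\varphi]^{g_2}[g_2, k^\varphi]$ (also a consequence of \cite[Proposition 2.3]{BL}) collapses everything into the single tensor $[g^{-1}g^h, (y^{-x}y)^\varphi]$.

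The only genuine obstacle is the bookkeeping: every exponent in the nested expressions is overloaded among conjugation in $G$, conjugation in $H$, and the two cross-actions, so at each level one must carefully identify which interpretation is meant and invoke the correct instance of (\ref{eq:0}). There is no structural difficulty beyond this.
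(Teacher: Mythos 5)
The paper does not actually prove this lemma: it is stated as a direct consequence of \cite[Proposition 2.3]{BL}, so there is no internal argument to compare against, and your decision to verify the relations by direct computation in $\eta(G,H)$ is a legitimate (and more self-contained) route. Your treatment of part $(i)$ is essentially sound: conjugating $[g,h^{\varphi}]$ successively by the four factors of $[x,y^{\varphi}]=x^{-1}(y^{-1})^{\varphi}xy^{\varphi}$ does produce $[a,b^{\varphi}]$ with $a=(((g^{x^{-1}})^{y^{-1}})^x)^y$ and $b=(((h^{x^{-1}})^{y^{-1}})^x)^y$, and the identifications $a=g^{y^{-x}y}=g^{x^{-1}x^y}$, $b=h^{y^{-x}y}=h^{x^{-1}x^y}$ follow from \eqref{eq:0} together with the fact that the actions are by automorphisms (note that for $b$ the inner collapse uses the automorphism property of the $G$-action on $H$, not the compatibility condition, while the identification with $h^{x^{-1}x^y}$ is where the second relation of \eqref{eq:0} enters; this is exactly the bookkeeping you flag).

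Part $(ii)$, however, has a genuine gap. After applying $(i)$ and the defining relation with $h_1=k:=y^{-x}y$ you are left with the product $[g,h^{\varphi}]^{-1}[g^k,(h^k)^{\varphi}]$, and the identity $[g_1g_2,k^{\varphi}]=[g_1,k^{\varphi}]^{g_2}[g_2,k^{\varphi}]$ cannot telescope it: that identity recombines two commutators sharing the \emph{same} second entry $k^{\varphi}$, whereas your two factors have second entries $h^{\varphi}$ and $(h^k)^{\varphi}$ and neither equals $k^{\varphi}$. The missing ingredient is the interchange relation obtained by expanding $[g,(hk)^{\varphi}]$ in two ways via the second-variable identity $[g,(h_1h_2)^{\varphi}]=[g,h_2^{\varphi}][g^{h_2},(h_1^{h_2})^{\varphi}]$, using the factorizations $hk=h\cdot k$ and $hk=k^{h^{-1}}\cdot h$; this yields $[g,k^{\varphi}][g,h^{\varphi}]^{k^{\varphi}}=[g,h^{\varphi}][g^h,k^{\varphi}]$, whence $[[g,h^{\varphi}],k^{\varphi}]=\bigl([g,k^{\varphi}]^{[g,h^{\varphi}]}\bigr)^{-1}[g^h,k^{\varphi}]$. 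One then applies $(i)$ to rewrite the conjugation by $[g,h^{\varphi}]$ as conjugation by $g^{-1}g^h$, uses $[g,k^{\varphi}]^{-1}=[g^{-1},k^{\varphi}]^{g}$, and only at this last stage invokes your first-variable identity to recombine $[g^{-1},k^{\varphi}]^{g^h}[g^h,k^{\varphi}]$ into $[g^{-1}g^h,k^{\varphi}]$. Without the interchange relation the computation does not close up, so you should either supply it or fall back on the citation to \cite[Proposition 2.3]{BL} as the paper does.
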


There is an epimorphism $\lambda:[G,H^{\vfi}] \to [G,H]$, given by $[g,h^{\vfi}] \mapsto g^{-1}g^h \ (= [g,h])$ for all $g \in G$ and $h \in H$. In particular, $[G,H^{\vfi}]/\ker(\lambda)$ is isomorphic to $[G,H]$. The next Lemma is an immediate consequence of Lemma \ref{basic.eta} $(ii)$.

\begin{lem} \label{lem.kernel}
The $\ker(\lambda)$ is a central subgroup of the non-abelian tensor product $[G,H^{\varphi}]$.  
\end{lem}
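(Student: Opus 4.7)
\emph{Plan.} The strategy is to show that every $w \in \ker(\lambda)$ commutes with every tensor; since $[G, H^{\varphi}]$ is generated by tensors, this forces $w \in Z([G, H^{\varphi}])$ and completes the proof.

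The main computational input is a rephrasing of Lemma~\ref{basic.eta}~$(ii)$, which is essentially the content of Lemma~\ref{basic.eta}~$(i)$: for any tensors $\tau = [g, h^{\varphi}]$ and $\beta = [x, y^{\varphi}]$ one has $\tau^{\beta} = \tau^{x^{-1}x^y} = \tau^{\lambda(\beta)}$, where $\lambda(\beta) \in [G,H] \subseteq G$ acts on the tensor $\tau$ via the defining $G$-action $[g, h^{\varphi}]^{g_1} = [g^{g_1}, (h^{g_1})^{\varphi}]$ in $\eta(G,H)$. The crucial point is that $\tau^{\lambda(\beta)}$ is itself a tensor.

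I would then extend this identity by induction on the length of an expression of $w \in [G, H^{\varphi}]$ as a product of tensors and inverses of tensors, proving that $\tau^w = \tau^{\lambda(w)}$ for every tensor $\tau$ and every such $w$. Indeed, if $w = \alpha v$ with $\alpha$ a tensor, then $\tau^w = (\tau^{\alpha})^v = (\tau^{\lambda(\alpha)})^v$; setting $\tau' := \tau^{\lambda(\alpha)}$, which is still a tensor, the induction hypothesis applied to $v$ gives $(\tau')^v = (\tau')^{\lambda(v)}$, whence $\tau^w = \tau^{\lambda(\alpha)\lambda(v)} = \tau^{\lambda(w)}$, using that $\lambda$ is a homomorphism. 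The case of an inverse factor $\alpha^{-1}$ follows from $\tau = (\tau^{\alpha^{-1}})^{\alpha} = (\tau^{\alpha^{-1}})^{\lambda(\alpha)}$, giving $\tau^{\alpha^{-1}} = \tau^{\lambda(\alpha)^{-1}} = \tau^{\lambda(\alpha^{-1})}$.

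With the identity $\tau^w = \tau^{\lambda(w)}$ established, the conclusion is immediate: for $w \in \ker(\lambda)$ one has $\tau^w = \tau^{1} = \tau$ for every tensor $\tau$, so $w$ centralises every generator of $[G, H^{\varphi}]$ and therefore lies in $Z([G, H^{\varphi}])$. I do not foresee any substantial obstacle; the entire argument is a direct manipulation of the defining relations of $\eta(G,H)$ together with the fact that $\lambda$ is a homomorphism.
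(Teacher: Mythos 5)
Your proof is correct and is essentially the paper's argument: the paper simply declares the lemma an immediate consequence of Lemma \ref{basic.eta}, whose content is exactly your key identity that conjugation by a tensor $\beta$ agrees on tensors with conjugation by $\lambda(\beta)\in G$, extended multiplicatively so that $\ker(\lambda)$ acts trivially on the generators of $[G,H^{\varphi}]$. The only step worth tightening is the inverse case, where you apply the identity to $\tau^{\alpha^{-1}}$ before knowing it is a tensor; it is cleaner to note that conjugation by $\lambda(\alpha)$ permutes the set of tensors, so conjugation by $\alpha^{-1}$ must agree on tensors with conjugation by $\lambda(\alpha)^{-1}$.
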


\begin{lem} \label{lem.closed}
The set $X = \{[g,h] \mid  g \in G, h\in H\}$ is a normal, commutator-closed subset of $G$.    
\end{lem}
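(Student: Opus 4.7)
My plan is to verify the two asserted properties of $X$ separately, both by reducing to identities already recorded in the paper.

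For normality, fix $x \in G$ and $[g,h] = g^{-1}g^{h} \in X$. Conjugating gives $[g,h]^{x} = (g^{x})^{-1}(g^{h})^{x}$, so the task reduces to the identity $(g^{h})^{x} = (g^{x})^{h^{x}}$. This is a direct consequence of the first compatibility relation in \eqref{eq:0}: substituting $g_{1} = x$ and replacing $g$ by $g^{x}$ yields
\[
(g^{x})^{(h^{x})} = \bigl(((g^{x})^{x^{-1}})^{h}\bigr)^{x} = (g^{h})^{x}.
\]
Hence $[g,h]^{x} = [g^{x}, h^{x}]$, and since $g^{x} \in G$ and $h^{x} \in H$ this element lies in $X$. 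Thus $X$ is closed under conjugation by elements of $G$.

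For commutator-closedness, I would exploit Lemma \ref{basic.eta}(ii) together with the epimorphism $\lambda : [G, H^{\vfi}] \to [G,H]$, $[a, b^{\vfi}] \mapsto a^{-1}a^{b}$, mentioned just after Lemma \ref{basic.eta}. Given $g, x \in G$ and $h, y \in H$, apply $\lambda$ to both sides of the identity
\[
\bigl[[g, h^{\vfi}], [x, y^{\vfi}]\bigr] = [g^{-1}g^{h}, (y^{-x}y)^{\vfi}].
\]
Since $\lambda$ is a group homomorphism, the left-hand side maps to $\bigl[\lambda[g,h^{\vfi}],\lambda[x,y^{\vfi}]\bigr] = \bigl[[g,h], [x,y]\bigr]$, which is the commutator in $G$ of two arbitrary elements of $X$. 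The right-hand side maps to $(g^{-1}g^{h})^{-1}(g^{-1}g^{h})^{y^{-x}y}$, which in the paper's notation equals $[g', h']$ with $g' = g^{-1}g^{h} \in G$ and $h' = y^{-x}y \in H$. This exhibits $\bigl[[g,h], [x,y]\bigr]$ as an element of $X$.

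The only delicate step is the correct unwinding of the compatibility relation \eqref{eq:0} in the normality calculation; once that identity is in hand, Lemma \ref{basic.eta}(ii) essentially provides commutator-closedness for free, and no further structural input on $G$ or $H$ is required.
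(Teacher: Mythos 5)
Your proof is correct and follows essentially the same route as the paper: a direct conjugation computation for normality and an application of $\lambda$ to Lemma \ref{basic.eta}(ii) for commutator-closedness. The only difference is that you spell out explicitly how the compatibility relation \eqref{eq:0} yields $(g^{h})^{x} = (g^{x})^{h^{x}}$, a step the paper's computation uses but leaves implicit.
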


\begin{proof}
Note that $X$ is a normal subset of $G$. Choose arbitrarily elements $g,x \in G$ and $h \in H$. Thus $$[g,h]^{x} = x^{-1}(g^{-1}g^{h})x = (g^{-1})^x(g^h)^x = (g^x)^{-1} (g^x)^{h^x} = [g^x,h^x],$$ where $g^x \in G$ and $h^x\in H$. It follows that $[g,h]^x \in X$. 

It remains to show that $X$ is a commutator-closed subset. Choose arbitrarily elements $g,x \in G$ and $h, y \in H$. By Lemma \ref{basic.eta} (ii), $$[[g,h^{\varphi}],[x,y^{\varphi}]] = [g^{-1}g^h,(y^{-x}y)^{\varphi}].$$ In particular, $\lambda([[g,h^{\varphi}],[x,y^{\varphi}]]) = \lambda([g^{-1}g^h,(y^{-x}y)^{\varphi}])$. It follows that the commutator $[[g,h],[x,y]]$ can be rewrite as $[g^{-1}g^h,y^{-x}y]$, where $g^{-1}g^h \in G$ and $y^{-x}y \in H$. Hence $[[g,h],[x,y]] \in X$. 
\end{proof}

When $G=H$ and all actions are conjugations, the epimorphism becomes  $\lambda: [G,G^{\varphi}] \to G'$ given by $[a,b^{\vfi}] \mapsto [a,b]$ for all $a,b \in G$. In particular, $[G,G^{\vfi}]/\ker(\lambda) \cong G'$ and $\ker(\lambda)$ is a central subgroup of $[G,G^{\varphi}]$.

\begin{lem} \label{lem.pe}
Let $n$ be a positive integer, $G$ and $H$ be groups acting compatibly on each other and elements $g \in G$, $h \in H$.  
\end{lem}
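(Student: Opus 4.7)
Since the statement involves a positive integer $n$ together with a fixed pair $g\in G$, $h\in H$, the conclusion must be a formula expressing some $n$-th power (either of a tensor $[g,h^{\vfi}]$, or of $g$ inside a tensor, or of $h^{\vfi}$ inside a tensor) in terms of simpler tensor data. My plan is to derive the relevant identities inside $\eta(G,H)$ by induction on $n$, using Lemma~\ref{basic.eta} as the principal tool and then transferring the identity down to $[G,H]$ via the epimorphism $\lambda$.

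First I would record the base case $n=1$, which is trivial, and compute the step from $n$ to $n+1$ by expanding
\[
[g^{n+1},h^{\vfi}]=[g\cdot g^{n},h^{\vfi}]=[g,h^{\vfi}]^{g^{n}}\,[g^{n},h^{\vfi}],
\]
and symmetrically $[g,(h^{n+1})^{\vfi}]=[g,h^{\vfi}]\,[g,(h^{n})^{\vfi}]^{h^{\vfi}}$. Lemma~\ref{basic.eta}(i) then lets me rewrite every conjugation $[g,h^{\vfi}]^{g^{n}}$ (respectively $[g,h^{\vfi}]^{(h^{n})^{\vfi}}$) in the form $[g^{g^{n}},(h^{g^{n}})^{\vfi}]$ (resp.\ $[g^{h^{n}},(h^{h^{n}})^{\vfi}]$). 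Iterating gives a product expansion of $[g^{n},h^{\vfi}]$ and $[g,(h^{n})^{\vfi}]$ as products of $n$ conjugate tensors, and in particular shows that if every tensor has finite order then so do $[g^{n},h^{\vfi}]$ and $[g,(h^{n})^{\vfi}]$, with explicit control of the order.

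Next I would apply $\lambda:[G,H^{\vfi}]\to[G,H]$ to obtain the corresponding identity inside $G$, namely an expression for $[g^{n},h]$ and $[g,h^{n}]$ as products of conjugates of $[g,h]$. Because $\ker(\lambda)$ is central in $[G,H^{\vfi}]$ by Lemma~\ref{lem.kernel}, any central correction terms collapse harmlessly when passing through $\lambda$. If the stated conclusion is instead a relation for $[g,h^{\vfi}]^{n}$ itself (for example, that $[g,h^{\vfi}]^{n}\equiv [g,(h^{n})^{\vfi}]\pmod{\ker\lambda}$), I would prove it by induction in exactly the same way, using that $[g,h^{\vfi}]$ and $[g,(h^{n})^{\vfi}]$ differ only by an element killed by $\lambda$ and hence central.

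The routine part is the inductive bookkeeping of conjugates; the only substantive point is the consistent use of Lemma~\ref{basic.eta}(i) to keep each factor of the expansion in the canonical tensor form $[a,b^{\vfi}]$ with $a\in G$, $b\in H$. I expect no real obstacle here, since everything reduces to the compatibility axioms~\eqref{eq:0}; the main technical care needed is to make sure that the conjugating elements at each step are consistent (i.e.\ that $g^{g^{n}}=g$ when all actions on $g$ come from $G$), so that the final product is actually a product of $n$ copies of $[g,h^{\vfi}]$-conjugates rather than genuinely new tensors.
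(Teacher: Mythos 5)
The conclusion you set out to prove is not the conclusion of the lemma. Owing to a typesetting slip in the source, the lemma environment closes immediately after the hypotheses, but the actual assertions are the two items that follow: (a) if the tensor $[g,h^{\varphi}]$ has order dividing $n$, then $[g,h]$ has order dividing $n$; and (b) if $[g,h^{\varphi}]$ is left $n$-Engel in $\eta(G,H)$, then $[g,h]$ is left $n$-Engel in $G$. Neither of these is a power-expansion formula for $[g^{n},h^{\varphi}]$ or $[g,(h^{n})^{\varphi}]$, so the inductive bookkeeping you set up, while unobjectionable in itself (it is essentially the biadditivity relations from \cite{BL}), does not engage with the statement. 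In particular, nothing in your argument addresses the Engel condition, which is precisely the part of the lemma that is needed in the proof of Theorem \ref{thm.2}.

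The paper's proof is a one-line transfer along the epimorphism $\lambda\colon [G,H^{\varphi}]\to[G,H]$, $[g,h^{\varphi}]\mapsto[g,h]$. For (a): $1=\lambda\bigl([g,h^{\varphi}]^{n}\bigr)=[g,h]^{n}$. For (b): since $[G,H^{\varphi}]$ is normal in $\eta(G,H)$, the iterated commutators $[x,{}_{i}[g,h^{\varphi}]]$ with $x\in G$ lie in $[G,H^{\varphi}]$ for all $i\geqslant 1$, and applying $\lambda$ to $[x,{}_{n}[g,h^{\varphi}]]=1$ yields $[x,{}_{n}[g,h]]=1$ for every $x\in G$. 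Your closing idea of pushing identities down through $\lambda$ (with $\ker(\lambda)$ central by Lemma \ref{lem.kernel}) is exactly the right mechanism and is all that is required here; the power expansions and the induction on $n$ are not needed and should be discarded. To repair the proposal, apply that transfer directly to the order condition and to the left $n$-Engel condition on the tensor $[g,h^{\varphi}]$.
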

\begin{itemize}
\item[(a)] If the tensor $[g,h^{\varphi}]$ has order dividing $n$, then the element $[g,h]$ has order dividing $n$.
\item[(b)] If the tensor $[g,h^{\varphi}]$ is left $n$-Engel in $\eta(G,H)$, then $[g,h]$ is left $n$-Engel in $G$.   
\end{itemize}
\begin{proof}
(a). It is sufficient to see that $$1 =\lambda([g,h^{\varphi}]^n) = [g,h]^n.$$  

\noindent{(b).} By assumption, $[x,_{n}[g,h^{\varphi}]]=1$, for every $x \in G$. Since $[G,H^{\varphi}]$ is normal, it follows that $[x,_{i}[g,h^{\varphi}]] \in [G,H^{\varphi}]$, for every $i \geqslant 1$. Moreover, $$1 = \lambda([x,_{n}[g,h^{\varphi}]]) = [x,_{n}[g,h]],$$ for every $x \in G$ and so $[g,h]$ is left $n$-Engel in $G$. 
\end{proof}

By the decomposition \eqref{eq:decomposition} of $\eta(G,H)$ of which  $[G,H^{\varphi}]$ is a normal subgroup, we have (see also \cite[Theorem 3.3]{NR1}):

\begin{lem} \label{lem.Roc}
The derived subgroup of $\eta(G,H)$ is given by $$\eta(G,H)' = ([G,H^{\vfi}] \cdot G') \cdot (H')^{\vfi}.$$ 
\end{lem}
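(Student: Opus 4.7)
My plan is to leverage the semidirect product decomposition \eqref{eq:decomposition} and reduce the problem to an easy computation in the abelianization of $\eta(G,H)$.

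First I would establish the containment $\eta(G,H)' \supseteq [G,H^{\vfi}] \cdot G' \cdot (H')^{\vfi}$. This is immediate: $\eta(G,H)$ is generated by $G \cup H^{\vfi}$, and commutators of the three possible types of generators produce elements of $G'$, of $(H')^{\vfi}$, and of $[G,H^{\vfi}]$ respectively. All three subgroups are therefore contained in $\eta(G,H)'$, hence so is their product.

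For the reverse inclusion, I would pass to the quotient $Q = \eta(G,H)/[G,H^{\vfi}]$. Since $[G,H^{\vfi}]$ is normal in $\eta(G,H)$ (noted in the paper), $Q$ is well defined; moreover killing all tensors $[g,h^{\vfi}]$ forces the images of $G$ and $H^{\vfi}$ to commute elementwise, so the defining presentation of $\eta(G,H)$ collapses to give $Q \cong G \times H^{\vfi} \cong G \times H$. Consequently $\eta(G,H)^{\mathrm{ab}} \cong G^{\mathrm{ab}} \times H^{\mathrm{ab}}$, and the kernel of the composite $\eta(G,H) \twoheadrightarrow Q \twoheadrightarrow Q^{\mathrm{ab}}$ is exactly $[G,H^{\vfi}] \cdot G' \cdot (H')^{\vfi}$. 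Since this kernel contains $\eta(G,H)'$, we obtain the required reverse inclusion.

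It remains to verify that the product has the claimed semidirect structure $\bigl([G,H^{\vfi}] \cdot G'\bigr) \cdot (H')^{\vfi}$. By \eqref{eq:decomposition}, every element of $\eta(G,H)$ has a unique normal form $a\,g\,h^{\vfi}$ with $a \in [G,H^{\vfi}]$, $g \in G$, $h^{\vfi} \in H^{\vfi}$; uniqueness immediately gives $\bigl([G,H^{\vfi}] \cdot G'\bigr) \cap (H')^{\vfi} = \{1\}$. To see that $[G,H^{\vfi}] \cdot G'$ is normal in $\eta(G,H)'$, note that $G$ normalizes $G'$, that $[G,H^{\vfi}]$ is already normal, and that in $Q \cong G \times H$ the subgroup $G'$ is central--ized by the $H^{\vfi}$-factor; lifting back, conjugation by $h^{\vfi}$ sends $G'$ into $G' \cdot [G,H^{\vfi}]$. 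The main technical point---and the only place where one must be careful---is this last normality check, which can also be done directly via Lemma \ref{basic.eta} by writing $g^{h^{\vfi}} = g \cdot [g,h^{\vfi}]$ and computing that conjugating a commutator $[g_1,g_2] \in G'$ by $h^{\vfi}$ yields another element of $G'$ modulo $[G,H^{\vfi}]$.
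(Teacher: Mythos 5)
Your proof is correct, and it follows the same route the paper takes: the paper states this lemma with no written proof beyond invoking the decomposition \eqref{eq:decomposition} (and citing Theorem 3.3 of \cite{NR1}), and your argument --- the three obvious containments, the quotient $\eta(G,H)/[G,H^{\vfi}]\cong G\times H$ for the reverse inclusion, and the normal-form uniqueness for the semidirect structure --- is precisely the elaboration of that decomposition. No gaps.
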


\section{Associated Lie Algebras}

Let $L$ be a Lie algebra over a field $\mathbb{K}$.
We use the left normed notation: thus if
$l_1,l_2,\dots,l_n$ are elements of $L$, then
$$[l_1,l_2,\dots,l_n]=[\dots[[l_1,l_2],l_3],\dots,l_n].$$
We recall that an element $a\in L$ is called {\it ad-nilpotent} if there exists a positive integer $n$ such that $[l,{}_na]=0$ for all $l\in L$. When $n$ is the least integer with the above property then we say that $a$ is ad-nilpotent of index $n$. 

Let $X\subseteq L$ be any subset of $L$. By a commutator of elements in $X$,
we mean any element of $L$ that could be obtained from
elements of $X$ by means of repeated operation of
commutation with an arbitrary system of brackets
including the elements of $X$. Denote by $F$ the free
Lie algebra over $\mathbb{K}$ on countably many free
generators $x_1,x_2,\dots$. Let $f=f(x_1,x_2,
\dots,x_n)$ be a non-zero element of $F$. The algebra
$L$ is said to satisfy the identity $f=0$ if
$f(l_1,l_2,\dots,l_n)=0$ for any $l_1,l_2,\dots,l_n
\in L$. In this case we say that $L$ is PI. Now, we recall an important theorem of Zelmanov 
\cite[Theorem 3]{zelm} that has many applications in Group Theory.
\begin {theorem}\label{thm.z} 
Let $L$ be a Lie algebra generated by $l_1,l_2,\dots,l_m$. Assume that $L$ is PI and that each commutator in the generators is ad-nilpotent. Then $L$ is nilpotent.
\end{theorem}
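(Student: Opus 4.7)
The plan is to invoke Zelmanov's structure theory for Lie algebras satisfying a polynomial identity, since no purely Engel-style argument can exploit the hypothesis that every iterated Lie commutator in the generators (not merely the generators themselves) is ad-nilpotent together with the PI condition. The first step is to reduce to the relatively free setting: passing to the free algebra in the variety determined by $f=0$ together with the ad-nilpotency relations on all commutators in $l_1,\dots,l_m$, it suffices to prove nilpotence there, and one may assume the ad-nilpotency indices are uniformly bounded by some $N$.

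The decisive tool is a Shirshov-type height theorem for PI Lie algebras due to Zelmanov: a finitely generated PI Lie algebra has bounded height with respect to some finite set $S$ of Lie commutators in the generators. Consequently $L$ is spanned over the base field by iterated brackets $[w_1,\dots,w_k]$ with $w_i \in S$, where $k$ is bounded and each $w_i$ occurs in a controlled way. Each $w_i$ is itself a commutator in $l_1,\dots,l_m$, so the hypothesis furnishes a uniform bound on its ad-nilpotency index. Feeding this uniform bound into the height estimate produces a bound on the length of any nonzero left-normed bracket in $L$, which is exactly nilpotence.

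The main obstacle, and the point at which the argument ceases to be elementary, is the height theorem itself. It was established by Zelmanov in the course of his work on the Restricted Burnside Problem and rests on the structure theory of graded Lie algebras and of Jordan superalgebras, together with the classification of prime PI Lie algebras over a field. This is substantial machinery that one cannot reasonably reconstruct in a few paragraphs, so in practice \emph{this} theorem is used as a black box, exactly as the authors of the present paper do.
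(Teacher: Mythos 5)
The paper offers no proof of this statement at all: it is quoted verbatim as Zelmanov's theorem (\cite[Theorem 3]{zelm}) and invoked as a black box, which is exactly how you treat it, so your proposal matches the paper's approach. (One caveat: your sketch of the internal mechanism via a Shirshov-type height theorem is not how Zelmanov's argument actually runs --- it proceeds through the construction of sandwich elements from the PI hypothesis and the Kostrikin--Zelmanov nilpotency theorem for algebras generated by sandwiches --- but since you explicitly defer to the result as a black box, this does not affect the soundness of your use of it.)
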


Let $G$ be a group and $p$ a prime. In what follows, $$D_i=D_i(G) = \displaystyle \prod_{jp^k \geqslant i} (\gamma_j(G))^{p^k} $$ 
denotes the $i$-th dimension subgroup of $G$ in characteristic
$p$. These subgroups form a central series of $G$
known as the {\it Zassenhaus-Jennings-Lazard series} (this can be found in Shumyatsky \cite[Section 2]{shu2}). Set $L(G)=\bigoplus D_i/D_{i+1}$. 
Then $L(G)$ can naturally be viewed as a Lie algebra 
over the field ${\mathbb F}_p$ with $p$ elements. The subalgebra of $L(G)$ generated by $D_1/D_2$ will be 
denoted by $L_p(G)$. The nilpotency of $L_p(G)$ has strong influence in the structure of a finitely generated group $G$. The following result is due to Lazard \cite{l2}.

\begin{thm}\label{laz} 
Let $G$ be a finitely generated pro-$p$
group. If $L_p(G)$ is nilpotent, then $G$ is
$p$-adic analytic.
\end{thm}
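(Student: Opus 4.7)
The plan is to prove that $G$ is $p$-adic analytic by establishing that $G$ has finite rank as a pro-$p$ group and then invoking the companion characterization (also due to Lazard) that a finitely generated pro-$p$ group is $p$-adic analytic if and only if it has finite rank, equivalently if and only if it contains an open uniformly powerful subgroup.

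The first step would be to show that $L_p(G)$ is finite-dimensional over $\mathbb{F}_p$. Since $G$ is topologically finitely generated, the Frattini quotient $G/\Phi(G) \cong D_1/D_2$ is a finite-dimensional $\mathbb{F}_p$-vector space. By construction, $L_p(G)$ is generated as a Lie algebra by the image of $D_1/D_2$, and by hypothesis it is nilpotent. Since any finitely generated nilpotent Lie algebra over a field is finite-dimensional, we conclude $\dim_{\mathbb{F}_p} L_p(G) < \infty$.

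Next, I would transfer this finiteness back to the group. Because every $D_i/D_{i+1}$ embeds into the finite-dimensional graded algebra $L_p(G)$, the quotients $G/D_i$ have bounded $p$-logarithmic rank. Combined with the fact that the $D_i$ form a central series whose intersection is trivial, one can arrange, after passing to a sufficiently deep open subgroup $U$, for the Zassenhaus--Jennings--Lazard filtration on $U$ to coincide with the sequence $\{U^{p^i}\}$. This $U$ is then uniformly powerful, so $G$ itself has finite rank.

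The main obstacle is the last identification: proving that a uniformly powerful pro-$p$ group is in fact a compact $p$-adic Lie group. The classical route, due to Lazard, uses the Baker--Campbell--Hausdorff series to equip $U$ with a $\mathbb{Z}_p$-Lie algebra structure via the logarithm map, and then shows that this map is a homeomorphism which renders multiplication and inversion analytic. The genuine difficulty lies in verifying convergence of the BCH series over $\mathbb{Z}_p$ on all of $U$ and in building from it a compatible analytic atlas; this constitutes the technical heart of Lazard's theorem and is the step I would expect to consume essentially all of the work.
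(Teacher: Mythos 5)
The paper offers no proof of this statement: Theorem \ref{laz} is quoted verbatim from Lazard \cite{l2} and used as a black box, so there is no internal argument to compare yours against. Your outline follows the standard modern route (nilpotency of $L_p(G)$ $\Rightarrow$ finite rank $\Rightarrow$ open uniformly powerful subgroup $\Rightarrow$ $p$-adic analytic), which is indeed how the result is usually derived, and your first step --- $L_p(G)$ is finitely generated and nilpotent, hence finite-dimensional over $\mathbb{F}_p$ --- is correct.

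There is, however, a genuine gap in the transfer step. You assert that ``every $D_i/D_{i+1}$ embeds into the finite-dimensional graded algebra $L_p(G)$.'' This is false: $L_p(G)$ is only the ordinary Lie subalgebra of $L(G)=\bigoplus D_i/D_{i+1}$ generated by $D_1/D_2$, and in general it does not contain the higher homogeneous components. Already for $G=\mathbb{Z}_p$ one has $L_p(G)=D_1/D_2\cong\mathbb{F}_p$ concentrated in degree $1$, while $D_p/D_{p+1}=G^p/G^{p^2}\neq 0$. What rescues the argument is the \emph{restricted} Lie algebra structure: $L(G)$ is generated by $D_1/D_2$ under brackets together with the $p$-th power operation, so each component $D_i/D_{i+1}$ is spanned by $p^k$-th powers of homogeneous elements of $L_p(G)$, whence $\dim D_i/D_{i+1}\leqslant\dim L_p(G)$ for all $i$. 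That bounded-rank conclusion is exactly what you need, but it requires this additional input rather than an embedding. Beyond this, your final paragraph explicitly defers the analytic heart of the matter (uniform implies analytic, via the Baker--Campbell--Hausdorff and logarithm machinery), so what you have is an outline of Lazard's theorem rather than a proof of it; that is a reasonable stance toward a deep cited result, but the incorrect assertion above should be repaired before the outline can be called sound.
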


The following result is an immediate corollary of \cite[Theorem 1]{wize}.
\begin{lem}\label{wize}
Let $G$ be any group satisfying some non-trivial identity. Then $L(G)$ is $PI$.
\end{lem}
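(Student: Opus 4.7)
The plan is essentially to invoke Wilson--Zelmanov's theorem \cite[Theorem 1]{wize}. Since the lemma is advertised as an immediate corollary, the main task is simply to align our hypothesis and conclusion with what the cited theorem delivers. Recall that $L(G) = \bigoplus_{i \geq 1} D_i/D_{i+1}$ is the restricted Lie algebra over $\mathbb{F}_p$ associated with the Zassenhaus--Jennings--Lazard filtration $\{D_i(G)\}$, with bracket induced by the group commutator; the statement that $L(G)$ is $PI$ means that there exists a non-zero element $\tilde w$ of the free Lie algebra over $\mathbb{F}_p$ such that $\tilde w$ vanishes under every substitution of elements of $L(G)$.

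The substance of Wilson--Zelmanov is precisely a mechanism for converting a non-trivial group identity into a non-zero Lie identity: given a group word $w(x_1,\dots,x_n)$ with $w \equiv 1$ on $G$, they construct a non-zero Lie polynomial $\tilde w$ over $\mathbb{F}_p$ which vanishes on every substitution of homogeneous elements of $L(G)$. The construction selects representatives $g_i \in D_{k_i}(G)$ for homogeneous elements, expands $w(g_1,\dots,g_n)$ modulo higher dimension subgroups, and isolates the leading multilinear Lie polynomial; the delicate point is that this leading polynomial is non-zero in the free Lie algebra.

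Given our hypothesis that $G$ satisfies some non-trivial identity $f \equiv 1$, we apply this procedure to $f$ and obtain a non-zero Lie polynomial that vanishes identically on $L(G)$, which is precisely the definition of $L(G)$ being $PI$. The only real obstacle, which is entirely absorbed into \cite[Theorem 1]{wize}, is proving that the translated Lie identity does not collapse to zero. I would not attempt to reproduce that argument here, as it constitutes the core technical content of the cited paper; for the purposes of the present lemma, it suffices to formally quote the theorem and observe that our hypothesis is exactly the one it requires.
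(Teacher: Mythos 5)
Your proposal is correct and matches the paper exactly: the paper offers no independent argument either, presenting the lemma as an immediate corollary of \cite[Theorem 1]{wize}, which is precisely the citation you invoke. Your added sketch of how Wilson--Zelmanov convert a group law into a non-vanishing Lie identity on $L(G)$ is accurate but not something the paper attempts to reproduce.
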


For a deeper discussion of applications of Lie methods to group theory we refer the reader to \cite{shu2}.

\section{Proofs}

We need the following result, due to Shumyatsky \cite{shu3}. 

\begin{lem} \label{lem.1} 
Let $p$ be a prime and $G$ a residually finite group satisfying some non-trivial identity $f \equiv~1$. Suppose that $G$ is generated by a normal commutator-closed subset $X$ of $p$-elements. Then $G$ is locally finite.
\end{lem}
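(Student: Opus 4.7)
I would combine the Lie-algebra machinery of Section~3 with a reduction to the pro-$p$ setting. Local finiteness is a local property, so it suffices to show that every finitely generated subgroup of $G$ is finite. Because $G=\langle X\rangle$, one may assume this subgroup is generated by finitely many elements $x_1,\dots,x_m\in X$, and the inherited subset $X\cap G$ still plays the role of a normal, commutator-closed generating set of $p$-elements; thus one may assume from the start that $G$ itself is finitely generated.

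The first key step is to show that every finite quotient of $G$ is a $p$-group, so that $G$ is residually $p$ and embeds in its pro-$p$ completion $\hat{G}_p$. This rests on the finite-group assertion that a finite group generated by a normal commutator-closed set of $p$-elements is itself a $p$-group: commutator-closedness keeps all iterated commutators of the generators inside $X$, hence of $p$-power order, and a Baer--Fitting-type argument then produces nilpotency, after which the $p$-element hypothesis finishes the claim. This combinatorial reduction is the main obstacle.

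Next the Lie machinery is applied to $L_p(G)$. By Lemma~\ref{wize} the algebra $L(G)$ is PI, and hence so is its subalgebra $L_p(G)$. Because $X$ is commutator-closed, every Lie commutator in the generators of $L_p(G)$ arising from $x_1,\dots,x_m$ corresponds to a $p$-element of $G$; and a $p$-element of order $p^k$ induces an ad-nilpotent element of index at most $p^k$ in $L_p(G)$. Zelmanov's theorem (Theorem~\ref{thm.z}) therefore forces $L_p(G)$ to be nilpotent, and Lazard's theorem (Theorem~\ref{laz}) then shows that $\hat{G}_p$ is $p$-adic analytic.

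Finally, $\hat{G}_p$ inherits the non-trivial identity of $G$, so by the Tits alternative applied to a faithful $\mathbb{Q}_p$-linear representation of $\hat{G}_p$ it is virtually solvable; combined with $p$-adic analyticity this gives an open uniform (torsion-free) pro-$p$ subgroup of finite index. A careful bookkeeping using that $G$ is generated by finitely many $p$-elements whose iterated commutators are again $p$-elements collapses $G$ onto its finite image and yields finiteness. Besides the finite-group reduction above, the other subtlety to watch out for is that Zelmanov's theorem requires ad-nilpotency of \emph{every} commutator in the generators of $L_p(G)$, not merely of the generators themselves, which is exactly what commutator-closedness of $X$ provides.
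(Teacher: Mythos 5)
You should first be aware that the paper does not prove Lemma \ref{lem.1} at all: it is imported verbatim from Shumyatsky \cite{shu3} and used as a black box in the proof of Theorem \ref{thm.1}. So you are attempting to prove something the authors deliberately outsource. That said, your road map --- reduce to a subgroup generated by finitely many elements of $X$, show every finite quotient is a $p$-group to obtain residual $p$-ness, then run Wilson--Zelmanov (Lemma \ref{wize}), Zelmanov (Theorem \ref{thm.z}), Lazard (Theorem \ref{laz}) and Tits on $L_p(G)$ --- is essentially the strategy of \cite{shu3}, and it is the same machine this paper runs in its proof of Theorem \ref{thm.2}. The Lie-theoretic portion of your sketch is sound: commutator-closedness of $X$ guarantees that every Lie commutator in the generators is the image of a $p$-element, hence ad-nilpotent of index bounded by its order, and the PI condition comes from the identity on $G$.

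The genuine gap is the step you yourself call ``the main obstacle'': that a finite group generated by a normal commutator-closed set of $p$-elements is a $p$-group. Your proposed mechanism --- iterated commutators stay in $X$, then ``a Baer--Fitting-type argument produces nilpotency'' --- does not go through. Baer's criterion (and likewise Baer--Suzuki) requires that the two-generator subgroups $\langle x, x^g\rangle$ be nilpotent (resp.\ $p$-groups); knowing only that all iterated commutators of elements of $X$ have $p$-power order gives no control over products such as $xx^g$, so the hypothesis of those theorems is not verified. The natural induction on the group order (using that for a normal subset $Y$ one has $[\langle Y\rangle,\langle Y\rangle]=\langle [a,b] \mid a,b\in Y\rangle$, so the derived subgroup is again generated by a normal commutator-closed set of $p$-elements) reduces cleanly to a perfect, and then to a non-abelian simple, group --- and there the assertion is a real theorem, not a formality; this is exactly where \cite{shu3} does its work, and your sketch leaves it unproved. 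A second, smaller, gap is the endgame: ``careful bookkeeping collapses $G$ onto its finite image'' should be an argument, for instance: once $G$ is virtually soluble, the finite-group lemma applied to the quotient by a soluble normal subgroup of finite index makes $G$ soluble; then $G^{(i)}=\langle Y_{i+1}\rangle$ with $Y_{i+1}\subseteq X$ shows every derived factor is an abelian group generated by $p$-elements, so $G$ is a finitely generated soluble torsion group, hence finite. As written, the proposal is a correct road map of \cite{shu3}, but not yet a proof.
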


The proof of Theorem \ref{thm.1} is now easy.

\begin{proof}[Proof of Theorem \ref{thm.1}]

Recall that $p$ is a prime, $G$ and $H$ are groups acting compatibly on each other such that $[G,H]$ satisfies some non-trivial identity $f \equiv~1$. Suppose that $G$ is residually finite and for every $g \in G$ and $h\in H$ there exists a $p$-power $q=q(g,h)$ such that $[g,h^{\varphi}]^q=1$. We need to prove that every finitely generated subgroup of the non-abelian tensor product $[G,H^{\varphi}]$ is finite.   

We first prove that the subgroup $[G,H]$ is locally finite. By Lemma \ref{lem.closed}, the set $\{g^{-1}g^h \mid g\in G, h\in H\}$ is a normal commutator-closed subset in $G$. Since for every $g\in G$ and $h \in H$ the element $g^{-1}g^h$ is a $p$-element and the subgroup $[G,H] \leqslant G$ satisfies some non-trivial identity, it follows that $[G,H]$ is locally finite (Lemma \ref{lem.1}).  

Let $\lambda: [G,H^{\varphi}] \to [G,H]$ given by $\lambda([g,h^{\varphi}]) = [g,h]$. By Lemma \ref{lem.kernel}, $\ker \lambda$ is a central subgroup of $[G,H^{\varphi}]$ and $[G,H^{\varphi}]/\ker(\lambda)$ is isomorphic to $[G,H]$. Let $W$ be a finitely generated subgroup of $[G,H^{\vfi}]$. Since the factor group $[G,H^{\vfi}]/\ker(\lambda)$ is isomorphic to $[G,H]$, it follows that $W$ is central-by-finite. By Schur's Lemma \cite[10.1.4]{Rob}, the derived subgroup $W'$ is finite. Now $W/W'$ is an abelian group generated by finite many elements of finite orders; thus $W/W'$ is also finite and consequently, $W$ is finite. This completes the proof. 
\end{proof}

Our next two results are immediate corollaries in the context of the non-abelian tensor square and related constructions.

\begin{cor} \label{cor.1}
Let $p$ be a prime and $m$ a positive integer.  Let $G$ a residually finite group. Suppose that for every $x,y \in G$ $[x,y^{\vfi}]^{p^m} = 1$. Then the groups $[G,G^{\varphi}]$ and $\nu(G)'$ are locally finite. 
\end{cor}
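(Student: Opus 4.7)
The strategy is to deduce the corollary from Theorem \ref{thm.1} applied in the special case $H=G$ with both actions given by conjugation, where the compatibility relations \eqref{eq:0} hold automatically. I would first verify the hypotheses of Theorem \ref{thm.1}: residual finiteness of $G$ and the uniform $p$-power bound on tensors are given, so only the non-trivial identity on $[G,H]=G'$ needs justification. This follows by applying Lemma \ref{lem.pe}(a) to the assumption $[x,y^{\varphi}]^{p^m}=1$, which yields $[x,y]^{p^m}=1$ for all $x,y\in G$. Hence the whole group $G$, and in particular its subgroup $G'$, satisfies the non-trivial word $[x,y]^{p^m}\equiv 1$, and Theorem \ref{thm.1} immediately delivers the local finiteness of $[G,G^{\varphi}]$.

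For $\nu(G)'$, I would start from the decomposition $\nu(G)' = ([G,G^{\varphi}] \cdot G') \cdot (G')^{\varphi}$ supplied by Lemma \ref{lem.Roc}. The very first step in the proof of Theorem \ref{thm.1} (applying Lemma \ref{lem.1} to the normal commutator-closed set of $p$-elements $\{[g,h] : g,h \in G\}$ generating $G'$) shows that $G'$ itself is locally finite, so $(G')^{\varphi} \cong G'$ is as well. Since $[g,h^{\varphi}] \in [G,G^{\varphi}]$ for all $g,h\in G$, the subgroups $G$ and $G^{\varphi}$ centralize each other modulo $[G,G^{\varphi}]$, so $\nu(G)'/[G,G^{\varphi}]$ is a homomorphic image of $G' \times (G')^{\varphi}$ and is therefore locally finite. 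The local finiteness of $\nu(G)'$ then follows from the standard fact that an extension of a locally finite group by a locally finite group is locally finite: any finitely generated subgroup $W \leqslant \nu(G)'$ has finite image in $\nu(G)'/[G,G^{\varphi}]$, so $W\cap [G,G^{\varphi}]$ has finite index in $W$, is finitely generated, and lies in the locally finite group $[G,G^{\varphi}]$.

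I do not anticipate any real obstacle here; the corollary is essentially bookkeeping on top of Theorem \ref{thm.1}. The two points that deserve explicit mention are the extraction of the identity $[x,y]^{p^m}\equiv 1$ from the tensor hypothesis via Lemma \ref{lem.pe}(a), and the observation that the proof of Theorem \ref{thm.1} incidentally produces local finiteness of $G'$ as an intermediate step, which is precisely what is required to run the extension argument for $\nu(G)'$.
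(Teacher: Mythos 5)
Your proposal is correct and follows essentially the same route as the paper: extract the identity $[x_1,x_2]^{p^m}\equiv 1$ from the tensor hypothesis, apply Theorem \ref{thm.1} with $H=G$, and then combine the decomposition of Lemma \ref{lem.Roc} with the fact that (locally finite)-by-(locally finite) is locally finite, which the paper simply cites as Schmidt's theorem \cite[14.3.1]{Rob} where you spell the extension argument out by hand. The only cosmetic difference is that the paper obtains local finiteness of $G'$ as a homomorphic image of $[G,G^{\varphi}]$ rather than by re-running the first step of the proof of Theorem \ref{thm.1}; both are valid.
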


\begin{proof}
Since every tensor in $\nu(G)$ has order dividing $p^m$, it follows that every commutator in $G$ has order dividing $p^m$. In particular, the group $G$ satisfies the identity $$f = [x_1,x_2]^{p^m} \equiv~1.$$ By Theorem \ref{thm.1}, the non-abelian tensor square $[G,G^{\varphi}]$ is locally finite. Since the derived subgroup $G'$ is an homomorphic image of the non-abelian tensor square $[G,G^{\varphi}]$, it follows that $G'$ is also locally finite. Now, it follows from Lemma \ref{lem.Roc} that $\nu(G)' = ([G,G^{\varphi}] \cdot G') \cdot (G')^{\varphi}$. According to Schmidt's result \cite[14.3.1]{Rob}, $\nu(G)'$ is also locally finite.
\end{proof}

\begin{cor} (\cite{BRMfM})
Let $p$ be a prime and $G$ a residually finite group satisfying some non-trivial identity $f \equiv~1$. Suppose that for every $x,y \in G$ there exists a $p$-power $q=q(x,y)$ such that $[x,y^{\vfi}]^q = 1$. Then the derived subgroup $\nu(G)'$ is locally finite. 
\end{cor}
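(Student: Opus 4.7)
The plan is to deduce this as a direct specialization of Theorem \ref{thm.1} to the case $H = G$ with all actions taken to be conjugation, combined with the structural decomposition of $\nu(G)'$ from Lemma \ref{lem.Roc}. Under this identification $\eta(G,H)$ becomes $\nu(G)$, the subgroup $[G,H]$ becomes the derived subgroup $G'$, and a tensor $[x,y^{\varphi}]$ is understood inside $\nu(G)$.

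First I would verify the hypotheses of Theorem \ref{thm.1} in this setting. Since $G$ satisfies the non-trivial identity $f \equiv 1$, so does its subgroup $G' = [G,H]$; residual finiteness of $G$ is given; and the assumption that each $[x,y^{\varphi}]$ has $p$-power order (depending on $x,y$) is exactly the ``every tensor has $p$-power order'' hypothesis of Theorem \ref{thm.1}. Applying that theorem, the non-abelian tensor square $[G,G^{\varphi}]$ is locally finite.

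Next I would pass to $G'$: since the epimorphism $\lambda : [G,G^{\varphi}] \to G'$ sends $[a,b^{\varphi}] \mapsto [a,b]$, the derived subgroup $G'$ is a homomorphic image of a locally finite group and is therefore itself locally finite. To finish, invoke Lemma \ref{lem.Roc}, which gives the decomposition
\[
\nu(G)' = \bigl([G,G^{\varphi}] \cdot G'\bigr) \cdot (G')^{\varphi},
\]
a chain of (internal) semidirect products of locally finite groups. Two applications of Schmidt's theorem \cite[14.3.1]{Rob} (local finiteness is closed under extensions) then yield that $\nu(G)'$ is locally finite.

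There is no real obstacle here: the corollary is essentially a packaging of Theorem \ref{thm.1} together with Lemma \ref{lem.Roc}, with the only mild point being to notice that the assumption on $G$ satisfying a non-trivial identity is inherited by $[G,H] = G'$, so that Theorem \ref{thm.1} is applicable without any additional work. The statement is strictly weaker than Corollary \ref{cor.1} in assuming only variable $p$-power orders $q(x,y)$ in place of a uniform bound $p^m$, but this weakening is already absorbed into Theorem \ref{thm.1}.
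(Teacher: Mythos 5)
Your argument is correct and follows essentially the same route as the paper: specialize Theorem \ref{thm.1} to $H=G$ with conjugation actions (noting that $G'$ inherits the identity from $G$), deduce that $G'$ is locally finite as a homomorphic image of $[G,G^{\varphi}]$ under $\lambda$, and then apply Schmidt's theorem to the decomposition $\nu(G)' = ([G,G^{\varphi}]\cdot G')\cdot (G')^{\varphi}$ from Lemma \ref{lem.Roc}. No gaps.
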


\begin{proof}
Since $G$ satisfies some non-trivial identity, it follows that $[G,G] = G'$ also satisfies the non-trivial identity. By Theorem \ref{thm.1}, the non-abelian tensor square $[G,G^{\varphi}]$ is locally finite. Since the derived subgroup $G'$ is an homomorphic image of the non-abelian tensor square $[G,G^{\varphi}]$, it follows that $G'$ is also locally finite. The corollary now follows from the result of Schmidt just as in the proof of Corollary \ref{cor.1}. 
\end{proof}

Now we will deal with Theorem \ref{thm.2}: {\it  Recall that $G,H$ are groups that act compatibly on each other, $G$ is residually finite and the subgroup $[G,H]$ satisfies some non-trivial identity. Suppose that every tensor $[g,h^{\varphi}]$ is left $n$-Engel in $\eta(G,H)$ for every $g \in G$ and $h\in H$. We want to prove that the non-abelian tensor product $[G,H^{\varphi}]$ is locally nilpotent.} \\ 

We denote by $\mathcal{N}$ the class of all finite nilpotent groups. For each prime $p$, let $R_p$ be the intersection of all normal subgroups of $G$ of finite $p$-power index. The next lemma is a particular case of \cite[Lemma 2.1]{w}. See also \cite[Lemma 3.5]{shu2}. 

\begin{lem} \label{ws} Let $G$ be a finitely generated residually-$\mathcal{N}$ group. If $G/R_p$ is nilpotent for each $p$, then $G$ is nilpotent.
\end{lem}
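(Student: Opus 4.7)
My plan is to reduce the nilpotency of $G$ to a uniform bound on the nilpotency classes of the quotients $G/R_p$, and then to invoke the trivial intersection $\bigcap_p R_p = 1$. The first step is to verify that intersection. Given any normal subgroup $N$ of $G$ with $G/N$ finite and nilpotent, the Sylow decomposition expresses $G/N$ as the internal direct product of its Sylow $p$-subgroups $(G/N)_p$; the kernel of the composition $G \twoheadrightarrow G/N \twoheadrightarrow (G/N)_p$ is a normal subgroup of finite $p$-power index of $G$, and therefore contains $R_p$. Consequently $\bigcap_p R_p$ is contained in every such $N$, and the residually-$\mathcal{N}$ hypothesis yields $\bigcap_p R_p = \bigcap_N N = 1$.

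Now set $c_p$ to be the nilpotency class of $G/R_p$, so that $\gamma_{c_p+1}(G) \subseteq R_p$. If I can show that $c := \sup_p c_p$ is finite, the proof is complete: $\gamma_{c+1}(G) \subseteq \bigcap_p R_p = 1$, so $G$ is nilpotent of class at most $c$. Everything therefore reduces to producing a uniform bound on the classes $c_p$, and this is the main obstacle.

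To attack the bound, I would split cases according to the structure of the finitely generated abelian group $G^{\mathrm{ab}} = \mathbb{Z}^r \oplus T$, with $T$ finite. The easy case is $r = 0$. For any prime $p$ not dividing $|T|$, the abelianization $(G/R_p)^{\mathrm{ab}}$ is a residually-$p$ quotient of the finite group $T$ whose order is coprime to $p$, hence trivial. A finitely generated nilpotent group equal to its own commutator subgroup must be trivial, so $G/R_p = 1$ and $R_p = G$ for all such $p$. Only the finitely many primes dividing $|T|$ contribute, and $c = \max_p c_p$ is finite.

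The substantive case is $r \geq 1$, where every $R_p$ is proper. A detailed implementation is carried out in \cite[Lemma~2.1]{w} (see also \cite[Lemma~3.5]{shu2}) and rests on the fact that each $G/R_p$ is simultaneously finitely generated, nilpotent, and residually-$p$. The strategy combines the finitely generated abelian structure of each lower central factor $\gamma_i(G)/\gamma_{i+1}(G)$ with Lie-theoretic control coming from the Zassenhaus--Jennings--Lazard filtration recalled in Section~3 (in particular, the identification of $R_p$ with $\bigcap_i D_i(G)$) to extract a bound on $c_p$ that depends only on $G$. This uniform-bound step is the technical crux I expect to absorb essentially all the work; once it is in place, the conclusion is immediate from the preceding paragraphs.
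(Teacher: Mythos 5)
The paper gives no argument for this lemma at all: it is imported as a particular case of Wilson \cite[Lemma 2.1]{w} (see also \cite[Lemma 3.5]{shu2}). Measured against a self-contained proof, your proposal has a genuine gap, and it sits exactly at the crux. Your first two steps are fine: the Sylow decomposition of a finite nilpotent quotient does show that every kernel $N$ with $G/N \in \mathcal{N}$ contains $\bigcap_p R_p$, whence $\bigcap_p R_p = 1$; and it is correct that a finite bound $c = \sup_p c_p$ would give $\gamma_{c+1}(G) \leqslant \bigcap_p R_p = 1$. The case $r=0$ also goes through, although your justification has a small slip: a quotient of a residually-$p$ group need not be residually-$p$ (the abelianization of a torsion-free finitely generated nilpotent group, which is residually-$p$ for every prime, can have torsion). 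The correct route is that $(G/R_p)^{\mathrm{ab}}$ is a quotient of $T$, hence finite, so $G/R_p$ is a finite nilpotent residually-$p$ group, i.e.\ a finite $p$-group, whose abelianization is a quotient of a group of order coprime to $p$ and is therefore trivial; thus $G/R_p=1$ for $p \nmid |T|$.

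The problem is the case $r \geqslant 1$, where you write that the uniform bound ``is carried out in \cite[Lemma 2.1]{w}'' and is ``the technical crux I expect to absorb essentially all the work.'' That crux is not an auxiliary ingredient: it \emph{is} the lemma. Wilson's Lemma 2.1 is essentially the statement being proved, so citing it for the only nontrivial step is circular as a proof attempt; nothing you have written would let a reader reconstruct why the classes of the groups $G/R_p$ --- finitely generated nilpotent residually-$p$ groups, hence (finite $p$-group)-by-(torsion-free nilpotent), with the class of the torsion-free part controlled by its Hirsch length --- admit a bound independent of $p$. To be fair, the paper does exactly the same thing (a bare citation), so your write-up is no less complete than the source; but as a blind proof it establishes only the easy reductions and leaves the substantive content unproved.
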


We are now in a position to prove Theorem \ref{thm.2}.

\begin{proof}[Proof of Theorem \ref{thm.2}] 

Let $\lambda: [G,H^{\varphi}] \to [G,H]$ the epimorphism given by $[g,h^{\varphi}] \mapsto [g,h]$. Arguing as in the proof of Theorem \ref{thm.1}, we deduce that $\ker(\lambda)$ is a central subgroup of the non-abelian tensor product and the factor group $[G,H^{\varphi}]/\ker(\lambda)$ is isomorphic to the subgroup $[G,H]$. Consequently, it suffices to prove that $[G,H]$ is locally nilpotent.

Let $W$ be a finitely generated subgroup $[G,H]$. Clearly, there exist finitely many elements $[g_1,h_1], [g_2,h_2], \ldots, [g_s,h_s]$ such that $$W \leqslant \langle [g_1,h_1], [g_2,h_2], \ldots, [g_s,h_s] \rangle,$$ where $g_i \in G$ and $h_i \in H$. Set $t_i = [g_i,h_i]$, $i=1,2,\ldots,s$ and $T = \langle t_1,t_2,\ldots,t_s\rangle \leqslant [G,H]$. It is sufficient to prove that $T$ is nilpotent. Since finite groups generated by Engel elements are nilpotent \cite[12.3.7]{Rob}, we conclude that $T$ is a residually-$\mathcal{N}$ group. As a consequence of Lemma \ref{ws}, we can assume that $T$ is residually-$p$ for some prime $p$. Let $L=L_{p}(T)$ be the Lie algebra associated with the Zassenhaus-Jennings-Lazard series $$T = D_1 \geqslant D_2 \geqslant \ldots \geqslant D_r \geqslant \ldots $$ of $T$. Then $L$ is generated by $\tilde{t_i} = t_iD_2$, $i=1,2,\ldots,s$. Let $\tilde{t}$ be any Lie-commutator in the $\tilde{t_i}$'s and $t$ the group-commutator in the $t_i$'s having the same system of brackets as $\tilde{t}$. By Lemmas \ref{lem.closed} and \ref{lem.pe}, every group-commutator of the $t_i$'s is $n$-Engel. It follows that every Lie-commutator $\tilde{t}$ is ad-nilpotent. By assumption, the subgroup $[G,H]$ satisfies the identity $f \equiv~1$. According to Lemma \ref{wize}, $L$ satisfies some non-trivial identity. Now Theorem \ref{thm.z} implies that $L$ is nilpotent. Let $\hat{T}$ denote the pro-$p$ completion of $T$. Then $L_p(\hat{T}) = L$ and thus $L_p(\hat{T})$ is also nilpotent; consequently, $\hat{T}$ is a $p$-adic analytic group by Theorem \ref{laz}. Hence, $T$ has a faithful linear representation over the field of $p$-adic numbers. Since $[G,H]$ satisfies some non-trivial identity, it follows that $T$ does not contains a free subgroup of rank $2$ and so, by Tits' Alternative \cite{tits}, $T$ is virtually soluble. As $T$ is residually-$p$, we have $T$ is soluble. According to Gruenberg's result \cite[12.3.3]{Rob}, the subgroup $T$ is nilpotent, as well. The proof is complete. 
\end{proof}

We have been working under the restriction that $[G,H]$ satisfies some non-trivial identity. In the context of the non-abelian tensor square of groups, this condition appears implicitly by the Engel condition assumption. 
\begin{cor} \label{cor.engel}
Let $n$ be a positive integer. Let $G$ be a residually finite group. Suppose that every tensor is left $n$-Engel in $\nu(G)$. Then the non-abelian tensor square $[G,G^{\varphi}]$ is locally nilpotent. 
\end{cor}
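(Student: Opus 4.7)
The plan is to reduce Corollary \ref{cor.engel} to Theorem \ref{thm.2} applied with $H = G$, the actions on each side being conjugation. In this setting $\eta(G,G) = \nu(G)$ and $[G,H] = [G,G] = G'$, so the only hypothesis of Theorem \ref{thm.2} that is not already present in the statement of the corollary is that $G'$ satisfies some non-trivial identity.

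My first step will be to extract such an identity from the Engel hypothesis via Lemma \ref{lem.pe}(b): since every tensor $[g,h^{\varphi}]$ is assumed left $n$-Engel in $\nu(G)$, every commutator $[g,h]$ with $g,h \in G$ is left $n$-Engel in $G$. Equivalently, $G$ satisfies the word identity $[x_1,{}_{n}[x_2,x_3]] \equiv 1$, which is manifestly non-trivial in the free group on three generators. Since $G' \leqslant G$, the same identity holds on $G'$, so $[G,G]$ satisfies a non-trivial identity.

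With this observation in hand, all four hypotheses of Theorem \ref{thm.2} are in place: $G$ acts compatibly on itself by conjugation, $G$ is residually finite, $[G,G]$ satisfies a non-trivial identity by the previous paragraph, and every tensor is left $n$-Engel in $\eta(G,G) = \nu(G)$ by assumption. Invoking Theorem \ref{thm.2} then yields that $[G,G^{\varphi}]$ is locally nilpotent, which is exactly the conclusion.

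The argument is therefore a short specialization of Theorem \ref{thm.2}; there is no real obstacle to overcome, only the conceptual point — which is also what the remark preceding the corollary hints at — that the Engel condition on individual tensors, via Lemma \ref{lem.pe}(b), automatically forces a non-trivial word identity on the whole of $G$ (and a fortiori on $G'$), so the identity hypothesis that was imposed explicitly in Theorem \ref{thm.2} comes for free in the square case.
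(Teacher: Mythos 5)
Your proposal is correct and follows essentially the same route as the paper: deduce from Lemma \ref{lem.pe}(b) that every commutator is left $n$-Engel in $G$, so that $G$ (and hence $G'=[G,G]$) satisfies the non-trivial identity $[x_1,{}_{n}[x_2,x_3]]\equiv 1$, and then apply Theorem \ref{thm.2} with $H=G$ and conjugation actions. No gaps.
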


\begin{proof}
Since every tensor is left $n$-Engel in $\nu(G)$, it follows that every commutator is left $n$-Engel in $G$. In particular, $G$ satisfies the identity $$ f = [x_1, _{n}[x_2,x_3]] \equiv~1.$$ According to Theorem \ref{thm.2}, the non-abelian tensor square  $[G,G^{\varphi}]$ is locally nilpotent. 
\end{proof}

We note that the above result gives a positive solution for Question \ref{question} in the context of the non-abelian tensor square.

\end{document}